\newtheorem{theorem}{Theorem}
\newtheorem{lemma}{Lemma}
\newtheorem{proposition}{Proposition}
\begin{document}

\begin{frontmatter}

 \author{Steffen Borgwardt}
 \ead{borgwardt@ma.tum.de}
 \address{Technische Universit\"at M\"unchen}

\author{Shmuel Onn}
\ead{onn@ie.technion.ac.il}
 \address{Technion - Israel Institute of Technology, Haifa}

\title{Efficient solutions for weight-balanced partitioning problems}


\begin{abstract}
\noindent We prove polynomial-time solvability of a large class of clustering problems where a weighted set of items has to be partitioned into clusters with respect to some balancing constraints. The data points are weighted with respect to different features and the clusters adhere to given lower and upper bounds on the total weight of their points with respect to each of these features. Further the weight-contribution of a vector to a cluster can depend on the cluster it is assigned to. Our interest in these types of clustering problems is motivated by an application in land consolidation where the ability to perform this kind of balancing is crucial.

Our framework maximizes an objective function that is convex in the summed-up utility of the items in each cluster. Despite hardness of convex maximization and many related problems, for fixed dimension and number of clusters, we are able to show that our clustering model is solvable in time polynomial in the number of items if the weight-balancing restrictions are defined using vectors from a fixed, finite domain. We conclude our discussion with a new, efficient model and algorithm for land consolidation.
\end{abstract}

\begin{keyword}
constrained clustering \sep convex maximization \sep integer programming \sep land consolidation

\MSC[2010] 90C10 \sep 90C25 \sep 90C90 \sep 91C20
\end{keyword}

\end{frontmatter}

\section{Introduction}

Partitioning a set of items while respecting some constraints is a frequent task in exploratory data analysis, arising in both operations research and machine learning; see e.g. \cite{bdw-09, hk-08}. We consider partitioning for which the sizes of the clusters are restricted with respect to multiple criteria. There are many applications where it is necessary to adhere to given bounds on the cluster sizes.

For example, these include the modeling of polycrystals in the material sciences  \cite{abgp-15} and face recognition using meshes, where the original mesh is partitioned into parts of equal sizes to obtain an optimal running time for graph-theoretical methods that are applied to all of these parts \cite{bbfs-11}. Our interest in these types of problems comes from an application in {\em land consolidation}. See for example \cite{bbg-11, b-03, bg-04} and in particular the outreach article \cite{bbg-14} for the impact in academia and practice.

\subsection{Land consolidation}\label{sec:land}

The farmers of many agricultural and private forest regions in Bavaria and Middle Germany own a large number of small lots that are scattered over the whole region. The reasons are strict heritage laws and a frequent change of ownership. There is significant overhead driving and an unnecessarily high cost of cultivation. In such a situation, a land consolidation process may be initiated by the state to improve on the cost-effective structure of the region. Voluntary land exchanges (by means of lend-lease agreements) are a popular method for such a process: The existing lots are kept without changes and the rights of cultivation are redistributed among the farmers of the region.

This corresponds to a combinatorial redistribution of the lots and can be modeled as a clustering problem where each lot is an item and each farmer is a cluster \cite{bg-04}. The main goal is to create large connected pieces of land for each farmer. One way to do so is to represent the lots by their midpoints in the Euclidean plane and to use the geographical locations of the farmsteads of each farmer as a set of sites. Then one performs a weight-balanced least-squares assignment of the lots to these sites \cite{b-10, bbg-11, bg-12}. The result is a redistribution where the farmers' lots lie close to their farmsteads. As a positive sideeffect, many of a farmer's lots are connected and can be cultivated together.

The lots differ in several features like their size, quality of soil, shape, and attached subsidies, and some of these features are even different for each farmer. For example one farmer may be eligible for subsidies if they cultivate a given lot, while another farmer may not. In such a situation, the lot is more valuable for the first farmer.  Here a natural constraint is that - after the redistribution - each farmer should have lots that (approximately) sum up to the farmer's original total with respect to each feature.

Of course, partitioning a weighted set of items into clusters of prescribed sizes ({\em weight-balanced partitioning}) is readily seen to be NP-hard, even for just two clusters and each item having just a single weight that is uniform for both clusters: deciding whether there is such a partition is at least as hard as the {\em Subset Sum} problem. The methods in the literature \cite{b-10, bbg-11, bbg-14, bg-12} approach this intrinsic hardness by solving least-squares problems by an LP relaxation and rounding. The model in \cite{bbg-11} performed particularly well in practice.

In this paper, we will present a general clustering framework that, when applied to land consolidation, improves on this model by dealing with its biggest shortcoming: The model in \cite{bbg-11} (and in fact \cite{b-10, bbg-14, bg-12}, too) is not able to balance the weights of clusters with respect to multiple features of the lots at the same time. Instead the redistribution of cultivation rights is done with respect to a single `value' of a lot, an aggregation of all its properties (and this value is the same for each farmer).

The farmers will not accept a large deviation with respect to any of the features of their total lots. They will only participate in the redistribution if they do not lose a significant area of land, do not lose a lot in quality of soil, and do not lose much of their subsidies at the same time. But the models in the literature will return `optimal' solutions for which not even the aggregated value necessarily is within the specified bounds (e.g. $3\%$) from the original. Further, even if a farmer's aggregated deviation is small, they may have received more land than before, but of much lower quality - which they will not accept. These are intrinsic weaknesses coming from the relaxation and rounding that are performed in \cite{b-10, bbg-11, bbg-14, bg-12}. In the practical implementation, this meant that a lot of the work still had to be done `by hand'.

However, the previous methods do not use all of the favorable properties of the input data for agricultural regions.  The data typically falls into only a fixed number of categories, which just comes from the way the features are measured in practice. For example, especially large slots ($>5$ hectare) are not traded at all. Further, one does not distinguish between lot sizes that differ by less than a tenth of a hectare, so that one obtains a finite domain of lot sizes. The quality of soil is measured with a number between $1$ and $100$, which is a finite domain itself. But in fact in a single agricultural region it is rare to have more than five different values within this range. The same happens for the subsidies attached to lots and other measures. With this additional assumption, we are able to present an algorithm that solves the problem of land consolidation exactly and efficiently.


\subsection{Our contributions}

In view of the above application, in the present paper we consider a new, generalized class of clustering problems where weight-balancing restrictions are defined using vectors from a fixed, finite domain. Each item specifies a vector of weights for each cluster.  These weight vectors represent the weight the item would contribute with respect to the different features if assigned to the respective cluster. The sizes of the clusters are bounded above and below with respect to all features.

Further, each item gives a vector of utility values with respect to each cluster representing the utility gained if the item is assigned to the corresponding cluster. (These vectors do not have to be from a fixed domain.)  Each cluster 'collects' a total utility by summing up the utility of all its items' utility vectors.  We then maximize an objective function that is convex on the sum of utility vectors of each part.

These are the functions used for shaped partitioning \cite{bhr-92, hor-99, o-10, or-04}, which encompass many of the objective functions commonly used in data analysis tasks, such as minimal-variance clustering; see e.g. \cite{or-04}. They are intimately related to the studies of gravity polytopes \cite{b-10, bg-12}. Shaped partitioning is known to be NP-hard if either the dimension of the utility vectors or the numbers of clusters is part of the input \cite{hor-99}, because it captures the hardness of convex maximization due to the possible existence of exponentially many local optima \cite{o-10}. For this, the number of clusters and the dimension of the utility vectors are fixed in our analysis.


Our main result is to show that this framework is polynomial-time solvable in the number of items, provided the weight vectors come from a fixed and finite domain. In Section $2$, we introduce a formal notation for these clustering models, state our main results, and discuss the objective functions that can be represented in our models in some detail. Section $3$ is dedicated to the necessary proofs. They are based on polynomial-size and -time reductions to the maximization of a (special) convex function over a system of constraints with an $n$-fold constraint matrix. The latter can be performed efficiently, which comes from a combination of recent algebraic Graver bases methods \cite{dhk-13, dhorw-09, o-10, o-12} and geometric edge-directions and zonotope methods \cite{hor-99, or-04}. We then transfer the complexity results to a generalized model  for land consolidation, and present a new algorithm for it, in Section $4$.

\section{Model and results}\label{sec:model}
In the following, we partition $n$ items $\{1,\dots,n\}$ into $p$ clusters $\pi_1,\dots,\pi_p$. Each item $j$ has a utility matrix $C^j\in \mathbb{R}^{d\times p}$. Its $i$-th column $C_i^j$ represents a vector of $d$ utility values gained if item $j$ is assigned to cluster $\pi_i$.

The utility of a clustering $\pi=(\pi_1,\dots,\pi_p)$ is $f(\sum\limits_{j\in \pi_1} C_1^j,\dots,\sum\limits_{j\in \pi_p} C_p^j)$, where $f:\mathbb{R}^{d\times p} \rightarrow \mathbb{R}$ is a convex function presented by a comparison oracle. Note that $\sum\limits_{j\in \pi_i} C_i^j$ is the sum of utility vectors of the items in cluster $\pi_i$.

Recall that a comparison oracle for $f$ is a routine that when queried on two values $y, z$ returns whether $f(y)\leq f(z)$ or not. In our framework, we do not even need an explicit form of the function, only comparisons of its values. This makes the complexity results presented in the following stronger  - essentially one works with as little information as possible. Of course, for any natural specific convex function that arises in data analysis, such as a norm or squared Euclidean distances (such as the ones that appear in Section $4$), it is easy to explicitly perform these comparisons.

The task is to find a clustering of maximal utility under certain balancing constraints. We begin with a first model for these balancing constraints and extend it only later on. First, assume that we are given a fixed, finite set $\Omega \subset \mathbb{Z}^p$ such that each item $j$ has a weight vector $w^j \in \Omega$. Informally, this vector contains a weight for each cluster $\pi_i$ that it would contribute if assigned to $\pi_i$. Further, each cluster $\pi_i$ defines a total weight $b_i \in \mathbb{Z}$ of the items that have to belong to it. Note that a finite cardinality $m$ of $\Omega$ can e.g. be achieved by bounding the absolute values of all components in the $w^j$: for maximal absolute value $\omega$, we obtain $m\leq (2\omega + 1)^p$.

The corresponding set of restrictions on the cluster sizes can be written as
$$ \sum\limits_{j\in \pi_i} w_i^j = b_i \quad \quad \bigl(i\in [p]\bigr).$$
Let $\Pi$ be the set of all partitions $\pi$ of $\{1,\dots,n\}$ into $p$ clusters. Then a full statement of this optimization problem $(P_1)$ would be
$$
\begin{array}{llcccccl}
\multicolumn{6}{c}
{(P_1) \quad \quad \quad \max\limits_{\pi \in \Pi} \;\  f(\sum\limits_{j\in \pi_1} C_1^j,\dots,\sum\limits_{j\in \pi_p} C_p^j)} &&\\[.1cm]
     \quad \quad \quad           &                           &     &  \sum\limits_{j\in \pi_i} w_i^j       &  =  \quad  b_i  &                      & \qquad &  \bigl(i\in [p]\bigr)
\end{array}
$$
Our first main result is polynomial-time solvability of this model for fixed $d$ and $p$.

\begin{theorem}\label{thm:polysolve}
Suppose $d$ and $p$ are fixed and suppose there is a fixed, finite set $\Omega \subset \mathbb{Z}^p$ such that all $w^j \in \Omega$. Then for every convex $f$ presented by a comparison oracle, problem $(P_1)$ can be solved in polynomial time.
\end{theorem}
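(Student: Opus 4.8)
The plan is to reduce $(P_1)$, by a reduction of polynomial size and time, to the maximization of a convex function composed with a linear map onto a space of fixed dimension, over the integer points of a system whose constraint matrix is $n$-fold. Once the two ``bricks'' of such a matrix are fixed, this reduced problem is solvable in time polynomial in $n$ and the encoding length of the data, by the cited combination of Graver-basis methods for $n$-fold integer programming with the edge-direction and zonotope technique for convex maximization. So the whole proof is really the construction of this reduction.

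First I would introduce $0/1$ variables encoding a clustering. The naive choice $x^j_i=[\,j\in\pi_i\,]$ does \emph{not} yield an $n$-fold matrix, because the balancing constraints $\sum_j w^j_i x^j_i = b_i$ have coefficients $w^j_i$ depending on the item $j$. This is exactly where finiteness of $\Omega$ enters: write $\Omega=\{v^1,\dots,v^m\}$, with $m$ a constant since $\Omega$ is fixed. For each item $j$ I would use a brick of $t:=mp$ variables $x^j_{v,i}$ indexed by $(v,i)\in\Omega\times[p]$, intended to indicate ``$w^j=v$ and $j\in\pi_i$''. Setting the upper bound of $x^j_{v,i}$ to $1$ when $v=w^j$ and to $0$ otherwise forces every variable with $v\neq w^j$ to vanish; the single local constraint per brick, $\sum_{v,i} x^j_{v,i}=1$, i.e. $A_2=(1,\dots,1)\in\mathbb{Z}^{1\times t}$, then forces $j$ into exactly one cluster. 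After this substitution the balancing constraints read $\sum_j\sum_{v\in\Omega} v_i\,x^j_{v,i}=b_i$ for $i\in[p]$, and the coefficient of $x^j_{v,i}$ is $v_i$, which depends only on $(v,i)$ and not on $j$. Hence the linking block $A_1\in\mathbb{Z}^{p\times t}$ (with entry $v_i$ in row $i$ and column $(v,i)$, and $0$ elsewhere) is the same for every item. Since $d$, $p$, and $m$ are all fixed, $A_1$ and $A_2$ are fixed matrices of fixed size, so this is a genuine $n$-fold system with $n$ bricks, right-hand side $(b_1,\dots,b_p)$ on the linking constraints and $1$ on each local constraint, and the $0/1$ bounds just described.

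Next I would encode the objective. For feasible $x$, the cluster sum is $\sum_{j\in\pi_i} C^j_i = \sum_j\bigl(\sum_v x^j_{v,i}\bigr) C^j_i = \sum_j\sum_v x^j_{v,i} C^j_i$, a linear image $Wx$ with $W$ having $dp$ rows (the column indexed by $(j,v,i)$ holds $C^j_i$ in the $i$-th block and $0$ elsewhere). Thus the objective of $(P_1)$ is exactly $f(Wx)$ with $f$ convex, presented by a comparison oracle, and $W$ of fixed row dimension $dp$. Feasible integer points of this $n$-fold program are in value-preserving bijection with the partitions $\pi\in\Pi$ satisfying all the constraints $\sum_{j\in\pi_i} w^j_i=b_i$ (if there are none, infeasibility is detected), so applying the efficient algorithm for convex maximization over $n$-fold systems solves $(P_1)$ in time polynomial in $n$ and the encoding length of the input. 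The one genuine obstacle is the non-uniformity of the balancing coefficients addressed above, and the reason $\Omega$ must be fixed and finite, rather than merely bounded, is precisely that $t=mp$, and with it $A_1$ and $A_2$, must not grow with $n$. Everything else — checking the $n$-fold shape, the bijection between feasible $0/1$ points and clusterings, and convexity of $f\circ W$ — is routine, the real work being carried by the cited $n$-fold Graver machinery together with the zonotope/edge-direction method for convex maximization.
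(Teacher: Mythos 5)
Your construction is correct and coincides with the paper's own proof: the same brick of $mp$ variables indexed by (weight class, cluster), the same $0/1$ bounds keyed to each item's actual weight vector, the same $A_1\in\mathbb{Z}^{p\times mp}$ and $A_2=(1,\dots,1)$, the same $dp$-row linear map encoding the cluster sums, and the same appeal to the $n$-fold convex integer programming result. Nothing further is needed.
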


Note that $(P_1)$ and Theorem \ref{thm:polysolve} already extend the model and complexity results in Section $3.3$ in \cite{dhorw-09}, where only the number of items in the clusters is balanced. So essentially the weights of all items are $1$ for each cluster. In constrast, we allow for arbitrary weights of the items and these weights may also differ for the different clusters.

In many applications, instead of having exact sizes of the partition parts, one is given lower and upper bounds on the sizes. To extend the above program to lower and upper bounds $b_i^{\pm}\in \mathbb{Z}$ on the total weights of the partition parts, we extend our formulation in several places.
 Formally, these constraints take the form
$$ b_i^-\leq \sum\limits_{j\in \pi_i} w_i^j \leq b_i^+ \quad \quad \bigl(i\in [p]\bigr).$$
For our later proofs, we now rewrite the corresponding optimization problem using only equalities. Let us introduce slack variables $s_i^{\pm}\in \mathbb{Z}$ for all $i\in [p]$. Then the new optimization problem $(P_2)$ can be stated as
$$
\begin{array}{llcccccccl}
\multicolumn{8}{c}
{(P_2) \quad \quad \quad \max\limits_{\pi \in \Pi} \;\  f(\sum\limits_{j\in \pi_1} C_1^j,\dots,\sum\limits_{j\in \pi_p} C_p^j)} &&\\[.1cm]
      \quad \quad \quad&                                    &     &  (\sum\limits_{j\in \pi_i} w_i^j)       & +  & s_i^+ & =  \quad   b_i^+  &                      & \qquad &  \bigl(i\in [p]\bigr)\\
      \quad \quad \quad&            &     &  (\sum\limits_{j\in \pi_i} w_i^j )      &- & s_i^-  & =  \quad  b_i^-  &                      & \qquad &  \bigl(i\in [p]\bigr)\\
\quad \quad \quad& & & &  & s_i^{\pm} & \geq \quad 0\quad &  & \qquad &  \bigl(i\in [p]\bigr)
\end{array}
$$
With some modifications to the construction for $(P_1)$ in the proof of Theorem \ref{thm:polysolve}, we can show a similar statement for this more general class of problems.
\begin{theorem}\label{thm:polysolve2}
Suppose $d$ and $p$ are fixed and suppose there is a fixed, finite set $\Omega \subset \mathbb{Z}^p$ such that all $w^j \in \Omega$. Then for every convex $f$ presented by a comparison oracle, problem $(P_2)$ can be solved in polynomial time.
\end{theorem}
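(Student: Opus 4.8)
The plan is to follow the proof of Theorem~\ref{thm:polysolve} as closely as possible and again reduce to a convex integer maximization over an $n$-fold constraint system, treating only the new slack variables with extra care. Recall that for $(P_1)$ one encodes a clustering by a nonnegative integer vector with one \emph{brick} $x^j=(x^j_{t,i})_{t\in\Omega,\,i\in[p]}$ per item $j$: the brick-wise bounds $x^j_{t,i}\le 1$ if $t=w^j$ and $x^j_{t,i}=0$ otherwise confine $j$ to its weight type, the single local equation $\sum_{t,i}x^j_{t,i}=1$ forces $j$ into exactly one cluster, and the $p$ linking equations $\sum_j\sum_t t_i\,x^j_{t,i}=b_i$ express the balancing constraints (here $t_i$ is the $i$-th entry of $t\in\Omega\subset\mathbb{Z}^p$). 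Writing the utility sums as $Wx$ for a matrix $W$ with $dp$ rows whose $(k,i)$-th row reads off $\sum_j C^j_{k,i}x^j_{w^j,i}$, problem $(P_1)$ becomes $\max\{f(Wx): x \text{ in this } n\text{-fold system}, x\ \text{integral}\}$, which is solved in time polynomial in $n$ by the Graver-basis and zonotope/edge-direction machinery of \cite{dhk-13,dhorw-09,hor-99,o-10,o-12,or-04} underlying Theorem~\ref{thm:polysolve}, because for fixed $d$, $p$ and $m=|\Omega|$ the brick width $mp$, the number $p$ of linking rows, the single local row, and the $dp$ rows of $W$ are all constants.

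For $(P_2)$ I would first normalize the bounds. Since $\sum_{j\in\pi_i}w^j_i$ always lies in $[-n\omega,n\omega]$, where $\omega=\max\{|t_i| : t\in\Omega,\ i\in[p]\}$, the constraint $\sum_{j\in\pi_i}w^j_i\le b^+_i$ is vacuous once $b^+_i\ge n\omega$ and infeasible once $b^+_i<-n\omega$, and dually for $b^-_i$. So, after detecting obvious infeasibility, one may replace each $b^+_i$ by $\min\{b^+_i,n\omega\}$ and each $b^-_i$ by $\max\{b^-_i,-n\omega\}$ without changing the set of feasible clusterings, after which every feasible clustering has slacks $0\le s^\pm_i\le 2n\omega$. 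Now there are two equivalent ways to finish. First, a near black-box route: for each choice of $s^+_1,\dots,s^+_p\in\{0,1,\dots,2n\omega\}$, set $s^-_i:=b^+_i-b^-_i-s^+_i$, discard choices with some $s^-_i\notin\{0,\dots,2n\omega\}$, and observe that the remaining system is exactly an instance of $(P_1)$ with exact targets $b_i:=b^+_i-s^+_i$; solving each of the at most $(2n\omega+1)^p$ — a polynomial number, since $p$ and $\omega$ are fixed — such instances by Theorem~\ref{thm:polysolve} and returning the best clustering found (or infeasibility if none works) solves $(P_2)$. Second, and closer to adapting the original construction: augment each brick by $2p$ nonnegative coordinates $\sigma^{+,j}_i,\sigma^{-,j}_i$, each with the \emph{fixed} upper bound $2\omega$, leave the local equation unchanged, replace the $p$ linking equations by the $2p$ equations $\sum_j\bigl(\sum_t t_i x^j_{t,i}+\sigma^{+,j}_i\bigr)=b^+_i$ and $\sum_j\bigl(\sum_t t_i x^j_{t,i}-\sigma^{-,j}_i\bigr)=b^-_i$, and extend $W$ by zero columns on the new coordinates; the feasible integer points of this system are in exact correspondence with the feasible clusterings of $(P_2)$ via $s^\pm_i=\sum_j\sigma^{\pm,j}_i$, and the block dimensions are still constants, so the same machinery applies directly. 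Either way the decoded clustering is $\pi_i=\{j: x^j_{w^j,i}=1\}$ and the running time is polynomial in $n$ and in the binary sizes of the $C^j$ and the $b^\pm_i$.

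The step I expect to be the real obstacle is precisely the treatment of the slacks $s^\pm_i$: they are ``global'' variables (one per cluster, not one per item) and their natural range is dictated by the input numbers $b^\pm_i$ rather than by the fixed domain $\Omega$, so a priori they do not fit the bounded-width, fixed-domain $n$-fold template that powers Theorem~\ref{thm:polysolve}. The clipping of $b^\pm_i$ to $[-n\omega,n\omega]$ is what rescues this, since it caps the required slack at $O(n\omega)$; after that, the slack is either enumerated over polynomially many values (as $p$ is fixed) or split into $n$ per-brick pieces each bounded by the constant $2\omega$. What then needs careful checking is only the bookkeeping: that clipping genuinely does not change the set of feasible clusterings, that the augmented $n$-fold system is in exact correspondence with $(P_2)$, and that the local/linking block sizes and the row count of $W$ stay constant — after which the argument is the same invocation of the convex $n$-fold maximization algorithm as in the proof of Theorem~\ref{thm:polysolve}.
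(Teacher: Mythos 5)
Your proposal is correct; in fact you give two valid routes, and it is worth distinguishing them from the paper's. Your second route is essentially the paper's construction: the paper also preserves the $n$-fold structure by planting copies of the slack variables inside every brick and zeroing them out of the objective matrix, but it \emph{concentrates} the entire slack in the first brick (upper bound $\nu=\sum_{i,j}|w_i^j|$ there, upper bound $0$ in all other bricks), whereas you \emph{distribute} it, giving each brick a constant per-coordinate bound $2\omega$ so that the total can reach $2n\omega$. Both keep the block dimensions fixed; your version requires the preliminary clipping of $b_i^{\pm}$ to $[-n\omega,n\omega]$ so that every feasible clustering of $(P_2)$ is actually representable, and you are right to flag this as the delicate point --- the paper's claim that no slack ever exceeds $\nu$ implicitly relies on the same normalization of unreasonably large $b_i^{+}$ (resp.\ small $b_i^{-}$), so your explicit treatment is, if anything, more careful. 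Your first route is genuinely different from the paper: it eliminates the slacks from the $n$-fold system altogether by observing that, after clipping, $s_i^{+}$ ranges over $\{0,\dots,2n\omega\}$ and determines $s_i^{-}$ and the exact target $b_i=b_i^{+}-s_i^{+}$, so $(P_2)$ reduces to $(2n\omega+1)^p=O(n^p)$ instances of $(P_1)$, compared via the oracle. This is more elementary and reuses Theorem~\ref{thm:polysolve} as a black box, at the cost of a polynomial (degree-$p$) blow-up in the number of calls; the paper's single augmented $n$-fold program avoids that overhead and is the construction that the proof of Theorem~\ref{thm:polysolve3} then extends to vector-valued weights.
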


Finally, we extend our model to allow for balanced weights with respect to $s\geq 1$ different features. Instead of item $j$ listing a vector $w^j\in \mathbb{Z}^p$ of weights with respect to the clusters, it now has a matrix $W^j\in \mathbb{Z}^{s\times p}$ listing $s$-dimensional vectors $W_i^{j}$ of weights contributed to the cluster $\pi_i$ it is assigned to. All of these weight matrices come from a fixed, finite set $\Omega \subset \mathbb{Z}^{s\times p}$. As before, a finite cardinality $m$ of $\Omega$ can be achieved by bounding the absolute values of all components in the $W^j$: for maximal absolute value $\omega$, one now obtains $m\leq (2\omega + 1)^{sp}$.

 Further, instead of $b_i^{\pm} \in \mathbb{Z}$, we now use $B_i^{\pm}\in \mathbb{Z}^s$, and likewise we have to use slack vectors $S_i^{\pm}\in \mathbb{Z}^s$ for all $i\in [p]$. This gives us an optimization problem $(P_3)$ as
$$
\begin{array}{llcccccccl}
\multicolumn{8}{c}
{(P_3) \quad \quad \quad \max\limits_{\pi \in \Pi} \;\  f(\sum\limits_{j\in \pi_1} C_1^j,\dots,\sum\limits_{j\in \pi_p} C_p^j)} &&\\[.1cm]
          \quad \quad \quad&                                        &     &  (\sum\limits_{j\in \pi_i} W_i^j)       & +  & S_i^+ & =  \quad   B_i^+  &                      & \qquad &  \bigl(i\in [p]\bigr)\\
      \quad \quad \quad&                    &     &  (\sum\limits_{j\in \pi_i} W_i^j )      &- & S_i^-  & =  \quad  B_i^-  &                      & \qquad &  \bigl(i\in [p]\bigr)\\
    \quad \quad \quad&     & & &  & S_i^{\pm} & \geq \quad 0\quad &  & \qquad &  \bigl(i\in [p]\bigr)
\end{array}
$$
Even for this extension, we keep polynomial-time solvability.
\begin{theorem}\label{thm:polysolve3}
Suppose $d$ and $p$ are fixed and suppose there is a fixed, finite set $\Omega \subset \mathbb{Z}^{s\times p}$ such that all $W^j \in \Omega$. Then for every convex $f$ presented by a comparison oracle, problem $(P_3)$ can be solved in polynomial time.
\end{theorem}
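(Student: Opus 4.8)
The plan is to reduce $(P_3)$ to $(P_2)$ (equivalently, to the $n$-fold integer programming framework already invoked for Theorem \ref{thm:polysolve2}) by encoding the $s$-dimensional weight constraints into a single, higher-dimensional but still fixed-size domain. The key observation is that $s$ is a fixed constant: although the problem now asks to balance $s$ features simultaneously, the total number of scalar balancing constraints is $2sp$, which is a constant once $s$ and $p$ are fixed. So the structural skeleton of the $n$-fold matrix --- one block of "global" rows coupling all items, plus one identity-type block per item enforcing that item $j$ is assigned to exactly one cluster --- is unchanged; only the fixed-size blocks grow by a constant factor $s$.

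First I would set up the $n$-fold formulation explicitly. For each item $j$ introduce a $0/1$ brick $x^j \in \{0,1\}^p$ with $\sum_{i} x^j_i = 1$ selecting the cluster of $j$; this is the per-item block and is identical to the one used in Theorems \ref{thm:polysolve}--\ref{thm:polysolve2}. The coupling rows now read $\sum_j \sum_i (W^j_i x^j_i) + S^+ = B^+$ and $\sum_j \sum_i (W^j_i x^j_i) - S^- = B^-$, together with $S^\pm \ge 0$; here $S^\pm \in \mathbb{Z}^{s\times p}$ are global slack variables appended once (not per item). Because every $W^j$ lies in the fixed finite set $\Omega \subset \mathbb{Z}^{s\times p}$, the entries of the coupling block are bounded by a constant, and the number of coupling rows $2sp$ is constant; the per-item matrix has $p+1$ columns and $O(sp)$ rows, all of constant size. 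Thus the constraint matrix is an $n$-fold product of two fixed matrices, exactly the form for which convex maximization over the integer points is solvable in time polynomial in $n$ by the combination of Graver-basis methods and the edge-direction/zonotope technique cited in the excerpt (\cite{dhk-13, dhorw-09, o-10, o-12, hor-99, or-04}).

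Next I would handle the objective. The utility depends on the clustering through $\big(\sum_{j\in\pi_1}C^j_1,\dots,\sum_{j\in\pi_p}C^j_p\big)$, which in the $x$-variables is the linear image $\big(\sum_j C^j_i x^j_i\big)_{i\in[p]}$; composing the fixed linear map $x\mapsto (\sum_j C^j_i x^j_i)_i$ with the convex $f$ yields a convex function of $x$, still presented by a comparison oracle (one simply evaluates the linear images and queries the oracle for $f$). This is precisely the "special convex function over an $n$-fold system" that the cited machinery maximizes. Feasibility of $(P_3)$ corresponds exactly to feasibility of this integer program: any partition $\pi$ gives a feasible $x$ (and suitable nonnegative integer slacks, since $B^- \le \sum_{j\in\pi_i}W^j_i \le B^+$ componentwise is exactly the constraint), and conversely any feasible integer $x$ has each $x^j$ a unit vector and hence encodes a partition. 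The slack variables are bounded by a constant times $n$, so they contribute only polynomially to the encoding length.

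The main obstacle --- really the only nontrivial point beyond bookkeeping --- is to verify that introducing the $s$ features does not break the $n$-fold structure: one must make sure the slack vectors $S^\pm$ are treated as global variables sharing a single fixed-size block, rather than naively attached to individual items, and that enlarging the coupling block from $2p$ to $2sp$ rows (and the per-item block correspondingly) keeps both constituent matrices of constant size independent of $n$. Once this is checked, the reduction is essentially the same as in the proof of Theorem \ref{thm:polysolve2} with every scalar weight $w^j_i$ replaced by the vector $W^j_i$ and every scalar bound $b_i^\pm$ by $B_i^\pm$, and the polynomial-time solvability follows verbatim from the same $n$-fold convex maximization result. I would close by remarking that the running time is polynomial in $n$ with the exponent and constants depending on the fixed parameters $d$, $p$, $s$, and $\max\{|\Omega|,\|\Omega\|_\infty\}$, matching the form of Theorems \ref{thm:polysolve} and \ref{thm:polysolve2}.
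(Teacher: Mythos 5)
There is a genuine gap, and it sits exactly at the point your proposal dismisses as bookkeeping. Your per-item brick is $x^j\in\{0,1\}^p$ and your coupling rows contain the terms $W^j_i x^j_i$. But then the column block of the coupling matrix belonging to item $j$ is built from the entries of $W^j$, which varies with $j$: two items with different weight matrices get different coupling blocks. The $n$-fold template of Proposition \ref{prop:polytime} requires the \emph{same} fixed matrix $A_1$ repeated $n$ times, so your constraint matrix is not an $n$-fold matrix in the sense of the cited result, and the polynomial-time solvability does not "follow verbatim." You have misread the role of the hypothesis $W^j\in\Omega$ with $\Omega$ fixed and finite: it is not there to bound the entries of the coupling block (bounded entries alone would not rescue the argument under Proposition \ref{prop:polytime}), but to allow one to \emph{enumerate} $\Omega=\{W^1,\dots,W^m\}$ and build a single fixed block $A_1'$ containing all $m$ candidate weight matrices. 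Each item then gets $mp$ binary variables (not $p$), and the item-dependent data is pushed entirely into the variable upper bounds $u^j$, which set to zero every group of variables except the one corresponding to item $j$'s actual weight matrix. Since $l$ and $u$ may vary freely across bricks, this keeps $A_1$ and $A_2$ genuinely fixed. This enumeration-plus-upper-bound device is the heart of the paper's proofs of all three theorems and is absent from your write-up.

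A second, smaller instance of the same issue: you insist the slacks $S^\pm$ be "global variables appended once," but the pure $n$-fold format has no slot for global variables --- every column must lie in some brick, and every brick must carry the same $A_1$ and $A_2$. The paper's construction instead gives \emph{every} item its own copy of the slack vectors inside its brick (so $\tilde A_1$ is the same $\bigl[\begin{smallmatrix}E&0&A_1'\\0&-E&A_1'\end{smallmatrix}\bigr]$ for all items) and then uses the upper bounds to force all slack copies except those of the first brick to zero. Your treatment of the objective (composing the fixed linear map $x\mapsto(\sum_j C^j_i x^j_i)_i$ with $f$, with $dp$ output rows constant) is fine and matches the paper once the variables are set up correctly, but as written the reduction does not land in the hypothesis class of the tool you invoke.
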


We conclude this section with some general remarks on the expressive power of the objective function. In general, the convex functions $f:\mathbb{R}^{d\times p} \rightarrow \mathbb{R}$ of the form $f(\sum\limits_{j\in \pi_1} C_1^j,\dots,\sum\limits_{j\in \pi_p} C_p^j)$ cover a wide range of objective functions commonly used in clustering. In particular, they directly represent the \emph{aggregation of utility values}. Each of the $p$ clusters $\pi_i$ contributes a summed-up utility vector $\sum\limits_{j\in \pi_i} C_i^j$ of its items and these vectors then are aggregated to a value in $\mathbb{R}$ by the convex function $f$. A simplemost case for $f$ is to just add up the components of all these vectors possibly (scaled by a factor). If the components of all $C_i^j$ are non-negative, such an objective function corresponds to a linear transform of the $l_1$-norm. Recall that all norms are convex functions. By choosing $l_s$ with $s>1$, one values single components in the utility vectors of the clusters higher in relation to many equally large values. For example, for $l_\infty$, only the largest absolute values among all components counts.

By means of our framework, one can represent even more general ways of aggregating the utility by turning to the so-called {\em clustering bodies} \cite{bg-10}: Here one combines two norms, one  $\|\cdot \|$ for $\mathbb{R}^d$ on the parts $\sum\limits_{j\in \pi_i} C_i^j$ and a monotone norm $\|\cdot \|_*$ for $\mathbb{R}^p$ aggregating these values. The objective function then takes the form
 $$f(\sum\limits_{j\in \pi_1} C_1^j,\dots,\sum\limits_{j\in \pi_p} C_p^j)=\|(\|\sum\limits_{j\in \pi_1} C_1^j\|,\dots,\|\sum\limits_{j\in \pi_p} C_p^j\|) \|_*.$$
The level set for value at most one is a convex body, a clustering body, and serves as the unit ball for a semi-norm. This explains convexity of the above $f$.

\section{Proofs}

The proofs for Theorems \ref{thm:polysolve} to \ref{thm:polysolve3} have a common demeanor in that we exhibit polynomial-time and -size transformation of the corresponding model to a variant of shaped partitioning for a set of constraints defined by an $n$-fold matrix.  Generally speaking, we aim for a problem statement of the form
\begin{eqnarray}\max \{ f(Cx): A^{(n)}x=b, l\leq x \leq u, x\in \mathbb{Z}^N\}\end{eqnarray}
with $l,u\in \mathbb{Z}^N$, $C\in \mathbb{Z}^{c\times N}$, and a convex function $f: \mathbb{Z}^c \rightarrow \mathbb{R}$ presented by a comparison oracle. Further, the $n$-fold matrix $A^{(n)}$ is derived from two matrices $A_1$ and $A_2$ by the classical construction in the form
$$A^{(n)} = \left[ {\begin{array}{ccc} A_1 & \dots & A_1 \\ A_2 & &  \\ & \dots & \\  &  & A_2 \end{array} }\right]$$
with $n$ copies of both $A_1$ and $A_2$ arranged in the depicted layout.
Note that $n$ corresponds to the number of items; each of them gets a set of columns with one of the building blocks $A_1$ and $A_2$. Clearly, $N$ has to be a multiple of $n$.

A problem of the form $(1)$ is polynomial-time solvable if $A_1$ and $A_2$ are fixed and $C$ has a constant number of rows $c$. The following proposition sums up this important tool for our proofs.

\begin{proposition} [\cite{dhorw-09}]\label{prop:polytime}
Let $c\in \mathbb{N}$ be fixed, and let $A_1, A_2$ be fixed matrices. Further, let $A^{(n)}$ be the $n$-fold matrix derived from $A_1$ and $A_2$, let $l,u\in \mathbb{Z}^N$, let $C\in \mathbb{Z}^{c\times N}$, and $f:\mathbb{Z}^c \rightarrow \mathbb{R}$ presented by a comparison oracle.

Then there is an algorithm that is polynomial in $n$ and the length of binary input for $C,l,u$ and $b$ that solves
$$\max \{ f(Cx): A^{(n)}x=b, l\leq x \leq u, x\in \mathbb{Z}^N\}.$$
\end{proposition}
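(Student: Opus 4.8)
The plan is to reduce the convex maximization in $(1)$ to a polynomially bounded number of \emph{linear} integer programs over the same $n$-fold system, and to solve each of those in polynomial time by Graver-basis augmentation. The guiding principle is that a convex function attains its maximum over any finite set $S$ at a vertex of $\operatorname{conv}(S)$, and that every vertex of a polytope is the maximizer of some linear functional. Here the relevant finite set is the projected feasible set $CS$, where $S := \{x \in \mathbb{Z}^N : A^{(n)}x = b,\ l \le x \le u\}$; since $f$ is convex, $\max\{f(y) : y \in CS\}$ is attained at a vertex of $\operatorname{conv}(CS)$, so it suffices to produce, for each such vertex, some $x \in S$ with $Cx$ equal to that vertex, and then to select the best candidate using the comparison oracle. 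The two ingredients needed to make this effective are (i) a polynomial bound on the number of vertices of $\operatorname{conv}(CS)$, with a way to enumerate linear objectives realizing them, and (ii) a polynomial-time subroutine for linear optimization over $S$.

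For ingredient (ii) I would invoke the structure theory of $n$-fold matrices. The key fact is that for fixed building blocks $A_1, A_2$ the Graver basis $\mathcal{G}(A^{(n)})$ --- the set of $\sqsubseteq$-minimal nonzero integer vectors in $\ker A^{(n)}$ --- has \emph{bounded type}: every element has at most a constant number of nonzero bricks (blocks of $N/n$ coordinates), so that $|\mathcal{G}(A^{(n)})|$ is polynomial in $n$ and $\mathcal{G}(A^{(n)})$ can be computed in time polynomial in $n$; see \cite{dhorw-09, o-12}. Given this, linear optimization $\max\{v \cdot x : x \in S\}$ is solved in polynomial time by a Graver-best augmentation scheme: starting from a feasible point, one repeatedly moves along a best improving step $x + \lambda g$ with $g \in \mathcal{G}(A^{(n)})$ and $\lambda \in \mathbb{Z}_{>0}$ feasible, reaching the optimum in a number of steps polynomial in the binary encoding length of the data \cite{dhk-13, o-10}.

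For ingredient (i) I would use the edge-direction and zonotope method \cite{hor-99, or-04}. A standard fact is that every edge direction of the integer hull $\operatorname{conv}(S)$ is a scalar multiple of an element of $\mathcal{G}(A^{(n)})$; hence $E := \mathcal{G}(A^{(n)})$ furnishes a polynomial-size set of edge directions. Projecting, the vectors $D := \{Ce : e \in E\} \subset \mathbb{Z}^c$ contain all edge directions of the projected polytope $\operatorname{conv}(CS)$, so its vertices are among the vertices of the zonotope $Z := \sum_{d \in D} [0,d]$. Since a zonotope generated by $|D|$ vectors in dimension $c$ has $O(|D|^{\,c-1})$ vertices, and $c$ is fixed while $|D|$ is polynomial in $n$, there are only polynomially many candidate vertices. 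I would enumerate one linear functional $h \in \mathbb{Z}^c$ per full-dimensional cell of the arrangement normal to the generators $D$, and for each solve $\max\{(C^{\top} h) \cdot x : x \in S\}$ by the subroutine of ingredient (ii). Each optimizer $x_h$ yields a candidate $C x_h$ attaining a vertex of $\operatorname{conv}(CS)$, and together these cover all such vertices; a final pass with the comparison oracle selects the candidate maximizing $f$.

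The main obstacle is the $n$-fold structure theory underlying ingredient (ii): establishing that $\mathcal{G}(A^{(n)})$ has bounded type and hence polynomial size, and that it is computable and usable for augmentation in polynomial time. This rests on finiteness of Graver bases together with a stabilization argument for the bricks as $n$ grows, and is the genuinely hard, algebraic core; ingredient (i) is comparatively routine once $\mathcal{G}(A^{(n)})$ is in hand. A secondary technical point is bit-complexity: the objectives $C^{\top} h$ must have polynomially bounded encoding length, which is arranged by choosing a small integer representative $h$ from each cell, so that the augmentation subroutine runs in polynomial time; and the number of cells must itself be polynomial, which again follows from the $O(|D|^{\,c-1})$ zonotope vertex count for fixed $c$. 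Note finally that the comparison oracle suffices throughout, since $f$ is evaluated only implicitly, through pairwise comparisons among the polynomially many candidates $C x_h$.
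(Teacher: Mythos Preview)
The paper does not prove this proposition at all: it is quoted from \cite{dhorw-09} as a black-box tool (note the citation in the proposition header), and the subsequent proofs of Theorems~\ref{thm:polysolve}--\ref{thm:polysolve3} merely verify that the clustering problems can be cast into the required format. So there is no ``paper's own proof'' to compare against.

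That said, your sketch is a faithful reconstruction of the argument in the cited literature. The two ingredients you isolate --- Graver-basis augmentation for linear $n$-fold IP and the edge-direction / zonotope enumeration for reducing convex maximization over a polytope to polynomially many linear objectives --- are exactly the ones combined in \cite{dhorw-09}; the paper itself alludes to this combination in the introduction (``a combination of recent algebraic Graver bases methods \cite{dhk-13, dhorw-09, o-10, o-12} and geometric edge-directions and zonotope methods \cite{hor-99, or-04}'') and again in Section~\ref{sec:complexity}. Your identification of the hard core --- the bounded-type / stabilization theorem for $\mathcal{G}(A^{(n)})$ when $A_1,A_2$ are fixed --- is also correct; this is precisely what the paper invokes as ``the key theorem in the theory of $n$-fold integer programming'' (referring to Theorem~4.4 in \cite{o-10}). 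The secondary bit-complexity and cell-count remarks are the right hygiene checks. In short: your proposal is correct and matches the standard proof that the paper imports by citation.
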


In view of Proposition \ref{prop:polytime}, to obtain polynomial-time-solvability for our models, we will show that there is a polynomial-time and -size reduction to the above form in the size of the input for our original models. Note that a representation of the data as above is in stark contrast to the `natural' representation in Section $2$. In addition to the explicit prerequisites of Proposition \ref{prop:polytime}, namely the constant number of rows for $C$ and the fixed $A_1$, $A_2$, we also have to guarantee that $N$ stays polynomial in the size of the original input.

In the following, we carefully perform the necessary reductions. We begin with the first model $(P_1)$ and Theorem \ref{thm:polysolve}, and then subsequently extend the constructions.

\begin{proof}[{\bf \em  Proof of Theorem $1$}]
We prove the claim by transforming $(P_1)$ to a statement in the form
$$\max \{ f(Cx): A^{(n)}x=b', l\leq x \leq u, x\in \mathbb{Z}^N\}.$$
Let us begin with the constraints. Recall that $m$ is the size of $\Omega$. Consider the two matrices
$$ A_1 = \left[ {\begin{array}{ccccccccc} w_1^1 & & &  & & w_1^m & & & \\  & w_2^1 & &  & \dots &  & w_2^m & & \\ & & \dots &  &  \dots &  & & \dots & \\  &  & & w_p^1  & &  &  & & w_p^m \end{array} } \right] \in \mathbb{R}^{p \times mp}$$
and
$$A_2=[ 1\; 1  \dots 1] \in \mathbb{R}^{1 \times mp}.$$

Further, we define the vectors $b=(b_1,\dots,b_p)^T$ and ${\bf{1}}=(1,\dots,1)^T\in \mathbb{R}^n$ to be able to write a system of equations $A^{(n)}x=b'=\left(\begin{array}{c} b \\ {\bf 1} \end{array}\right)$. The variables $x\in \mathbb{R}^{n(mp)}$ here correspond to decision variables in the following way:

We have $x=(x^1,\dots,x^n)^T$, where $x^j$ corresponds to the $j$-th column-block of the $n$-fold matrix, i.e. to the $j$-th 'copy' of $A_1$ and $A_2$. Each $x^j$ takes the form $x^j=(x_1^j,\dots,x_m^j)^T$, i.e. it consists of $m$ blocks $x_i^j\in\mathbb{Z}^{p}$. Note that this construction yields $N=n(mp)$, which is polynomial in the input.

Further, let us define lower and upper bounds on $x$ in the form  $l\leq x \leq u$: Choose lower bounds $l= {\bf 0}\in \mathbb{R}^N$ and define the upper bound vector $u=(u^1,\dots,u^n)^T$ to consist of $n$ blocks $u^j=(u_1^j,\dots,u_m^j)^T\in \mathbb{Z}^{mp}$, setting $u_i^j={\bf 1}\in\mathbb{Z}^p$ if item $j$ has weight vector $w^i$  and  $u_i^j={\bf 0}\in\mathbb{Z}^p$ otherwise.

The system
$$A^{(n)}x=\left(\begin{array}{c} b \\ {\bf 1} \end{array}\right), \quad l\leq x \leq u$$
can be derived in polynomial time and is of polynomial size, as it only uses a polynomial number of copies of numbers from the original input (and of zeroes and ones). Let us discuss why it is equivalent to our original set of constraints. First, note that the lower and upper bounds force an integral solution $x$ to be a $0,1$-solution.

The assignment of item $j$ to a cluster is determined by the decision variables $x^j$ corresponding to the $j$-th block of $A_1$ and $A_2$. The '$A_2$-block' of the system tells us that precisely one entry in $x^j$ is equal to $1$. By the upper bounds on $x$, this can only be the case for an index which corresponds to a correct combination of cluster $\pi_i$ and weight contribution by item $j$. For this, in the  '$A_1$-block' of the system, the correct weight for $x_j$ is added up in the equation to obtain total cluster size $b_i$. Thus all items are assigned and all clusters obtain the correct total weight.

It remains to check whether the objective function can be written in form $f(Cx)$, where $C$ has a constant number of rows. Note that
$$\sum\limits_{j\in \pi_s} C_s^j = \sum\limits_{j=1}^n  \sum\limits_{i=1}^m ((x_i^j)^T {\bf 1}) C_s^j.$$
Thus $C$ takes the form
$$C=[ (C')^1 \dots (C')^1 \dots (C')^n  \dots (C')^n] \in \mathbb{R}^{dp \times n(mp)},$$
with $m$ consecutive copies for each of the $(C')^j$. These are defined as
$$ (C')^j = \left[ {\begin{array}{cccc} C_1^j & & &  \\  & C_2^j & &  \\ & & \dots &  \\  &  & & C_p^j   \end{array} } \right] \in \mathbb{R}^{dp \times p}.$$

The claim now follows from the number of $dp$ rows being constant and observing that $Cx$ yields the vector $((\sum\limits_{j\in \pi_1} C_1^j)^T,\dots,(\sum\limits_{j\in \pi_p} C_p^j))^T)^T$. We satisfy all prerequisites of Proposition \ref{prop:polytime} and are done.
\end{proof}

Next, we prove Theorem \ref{thm:polysolve2} by extending the above construction. The goal is to enter the slack variables $s_i^{\pm}$ into the model while preserving the $n$-fold structure of the program. It is possible to do so by duplicating them and moving them into the building blocks of the matrix as follows.

\begin{proof}[{\bf \em Proof of Theorem $2$}] 

Let $A$, $b$ {etc.} refer to the construction for Theorem \ref{thm:polysolve}. Their corresponding new counterparts are denoted by a bar, e.g. $\bar A$, $\bar b$. We now want a problem statement for $(P_2)$ of the form
$$\max \{ f(\bar Cx): \bar A^{(n)}\bar x=\bar b', \bar l\leq \bar x \leq \bar u, \bar x\in \mathbb{Z}^{\bar N}\},$$
where $\bar A^{(n)}$ is an $n$-fold matrix derived by the standard construction using matrices $\bar A_1$ and $\bar A_2$.

For this formulation, we use a vector $\bar x =(\bar x^1,\dots, \bar x^n)^T$, where the column-blocks $\bar x^j=((s^j)^+,(s^j)^-,x^j)^T$ now also have copies of the slack variables $(s^j)^+=((s_1^j)^+,\dots,(s_p^j)^+)^T$ and $(s^j)^-=((s_1^j)^-,\dots,(s_p^j)^-)^T$ for each of the original $x^j$. Note that $\bar x \in \mathbb{R}^{\bar N}$, where $\bar N=n(2p+mp)$, which again is polynomial in the input.

$\bar A^{(n)}$ is set to
$$\bar A^{(n)} = \left[ {\begin{array}{ccc} \bar A_1 & \dots & \bar A_1 \\ \bar A_2 & &  \\ & \dots & \\  &  & \bar A_2 \end{array} }\right],$$
where $$\bar A_1 = \left[ {\begin{array}{ccc} E & {\bf 0} & A_1 \\  {\bf 0}  & -E & A_1   \end{array} }\right]\in\mathbb{R}^{2p\times (2p+mp)} \text{ and } \bar A_2 =[0\quad  0\quad  A_2]\in \mathbb{R}^{1\times (2p+mp)}$$ with $E\in \mathbb{R}^{p\times p}$ the unit-matrix and ${\bf 0}\in \mathbb{R}^{p\times p}$ the $0$-matrix. Further $\bar b'=(\bar b, \bf{1})^T$ with ${\bf{1}}=(1,\dots,1)^T\in \mathbb{R}^n$ and $\bar b=(b_1^+,\dots,b_p^+,b_1^-,\dots,b_p^-)^T$ listing the upper and lower bounds on the cluster sizes.\footnote{In the notation of $\bar b'$ and in other places, we avoid double transposes when writing vectors for better readability when the context is clear. For example $c=(a, b)^T$ for two column vectors $a,b$ would be a column vector $c$.}

Next, let us define lower and upper bounds $\bar l, \bar u \in \mathbb{R}^{\bar N}$ for $\bar x$. As before, $\bar l={\bf 0} \in  \mathbb{R}^{\bar N}$. Further, we use the upper bound vector $\bar u=(\bar u^1,\dots,\bar u^n)^T$, where $\bar u^1=(\nu,\dots,\nu, u^1)^T\in \mathbb{R}^{2p+mp}$ begins with $2p$ entries of value $\nu=\sum\limits_{i=1}^p\sum\limits_{j=1}^n |w_i^j|$. Further $\bar u^i=(0,\dots,0,u^i)^T\in \mathbb{R}^{2p+mp}$ begins with $2p$ entries $0$ for all $i>1$.  Note that in a feasible solution no slack variable ever is larger than $\nu$ and that $\nu$ has an encoding length that is polynomial in the input size of $(P_2)$.

Finally, we construct the new matrix $\bar C=(\bar C^1,\dots,\bar C^n)\in \mathbb{R}^{dp\times N}$ of $n$ building-blocks $\bar C^j= [ {\bf{0}} \quad (C')^j \dots (C')^j]\in \mathbb{R}^{dp\times 2p+mp}$, where ${\bf{0}}\in \mathbb{R}^{dp\times 2p}$ is a $0$-matrix and there again are $m$ copies of the $(C')^j $ in the blocks $\bar C^j$. Essentially, all parts that correspond to slack variables are ignored for the objective function value, i.e. $f(\bar C \bar x) = f(Cx)$.

The new system $\bar A^{(n)}\bar x=\bar b', \bar l\leq \bar x \leq \bar u$, as well as the new objective function matrix $\bar C$, extend the original system only by introducing an additional polynomial number of zeroes, ones and copies of numbers from the input. The matrices $\bar A_1, \bar A_2$ still have a constant number of rows and columns, and $\bar C$ has a constant number of rows.

It remains to discuss why the new system is equivalent to the constraints of $(P_2)$. Clearly, the '$x$-part' of $\bar x$ still has to be a $0,1$-solution. These $0,1$-entries have the same role as in the original construction, as in the $\bar A_2$-part the slack variables are only combined with zeroes.

The lower bounds on the slack variables force them to be non-negative. By the upper bounds, only the slack variables in the first block of the construction can be greater than zero. The other slack variables are only in the system to preserve the $n$-fold structure of the matrix. Note that it suffices to allow integral values for the slack variables (recall $\bar x \in \mathbb{Z}^{\bar N}$), as both the $w_i^j$ and the $b_i^{\pm}$ are integral. The rows of the $\bar A_1$-blocks then guarantee that the cluster sizes plus the slack variables add up to the given lower and upper bounds. In particular, the clusters' sizes lie in the given, bounded range.

Thus the new system corresponds to $(P_2)$ and satisfies all prererequisites of Proposition \ref{prop:polytime}, in particular $\bar A_1, \bar A_2$, and $\bar C$ do so.
\end{proof}

Finally, we prove Theorem \ref{thm:polysolve3} by extending the construction in the proof of Theorem \ref{thm:polysolve2} to allow for $s$ different features for each item. We continue with the notation introduced for the proofs of both Theorem \ref{thm:polysolve} and \ref{thm:polysolve2}.

\begin{proof}[{\bf \em Proof of Theorem $3$}]
We want a problem statement of $(P_3)$ of the form
\[\max \{ f(\tilde C\tilde x): \tilde A^{(n)}\tilde x=\tilde b', \tilde l\leq \tilde x \leq \tilde u, \tilde x\in \mathbb{Z}^{\tilde N}\}, \tag*{$(P^*)$}\]
where $\tilde A^{(n)}$ is an $n$-fold matrix derived by the standard construction for matrices $\tilde A_1$ and $\tilde A_2$.

We use $\tilde x =(\tilde x^1,\dots, \tilde x^n)^T$, where the column-blocks $\tilde x^j=((S^j)^+,(S^j)^-,x^j)^T$ all contain their own copies of the slack vectors $(S^j)^+=((S_1^j)^+,\dots,(S_p^j)^+)^T$ and $(S^j)^-=((S_1^j)^-,\dots,(S_p^j)^-)^T$. Herewith $\tilde x \in \mathbb{R}^{\tilde N}$ for $\tilde N=n(2sp+mp)$, which is polynomial in the size of the input.

For the $n$-fold construction of $\tilde A^{(n)}$, let now $$\tilde A_1 = \left[ {\begin{array}{ccc} E & {\bf 0} & A_1' \\  {\bf 0}  & -E & A_1'   \end{array} }\right]\in\mathbb{R}^{2sp\times (2sp+mp)} \text{ with } $$ $$A_1' = \left[ {\begin{array}{ccccccccc} W_1^1 & & &  & & W_1^m & & & \\  & W_2^1 & &  & \dots &  & W_2^m & & \\ & & \dots &  &  \dots &  & & \dots & \\  &  & & W_p^1  & &  &  & & W_p^m \end{array} } \right] \in \mathbb{R}^{sp \times mp},$$
where $E\in \mathbb{R}^{sp\times sp}$ is the unit-matrix and ${\bf 0}\in \mathbb{R}^{sp\times sp}$ the $0$-matrix. Further, let $$\tilde A_2 =[0\quad  0\quad  A_2]\in \mathbb{R}^{1\times (2sp+mp)}.$$
The new right-hand side vector is  $\tilde b'=(\tilde b, {\bf 1})^T\in \mathbb{R}^{2sp+n}$ with ${\bf{1}}=(1,\dots,1)^T\in \mathbb{R}^n$ and $\tilde b=(B_1^+,\dots,B_p^+,B_1^-,\dots,B_p^-)^T$ listing the upper and lower bound vectors on the cluster sizes.

Again, we use $\tilde l={\bf 0}\in \mathbb{R}^{\tilde N}$ as lower bounds. The upper bound vector $\tilde u=(\tilde u^1,\dots,\tilde u^n)^T \in  \mathbb{R}^{\tilde N}$ contains $\tilde u^1=(\tilde \nu,\dots,\tilde \nu, u^1)^T\in \mathbb{R}^{2sp+mp}$ which begins with  $2p$ vectors $\tilde \nu=\sum\limits_{i=1}^p\sum\limits_{j=1}^n |W|_i^j \in \mathbb{R}^s$ (where $|W|_i^j$ refers to the vector listing the absolutes in $W_i^j$ componentwisely), and $\bar u^i=(0,\dots,0, u^i)^T\in \mathbb{R}^{2sp+mp}$ begins with $2sp$ entries $0$ for all $i>1$. The $u^j =(u_1^j,\dots,u_m^j)^T$ are defined by setting $u_i^j=1\in\mathbb{Z}^p$ if item $j$ has weight matrix $W^i$  and  $u_i^j=0\in\mathbb{Z}^p$ otherwise.

Finally, as in the construction for Theorem \ref{thm:polysolve2}, it is necessary to ignore the parts that correspond to slack variables for the objective function value. It is possible to do so by means of the matrix $\tilde C=(\tilde C^1,\dots,\tilde C^n)\in \mathbb{R}^{dp\times \tilde N}$ that consists of $n$ building-blocks $\tilde C^i= [ {\bf{0}} \quad (C')^1 \dots (C')^1]\in \mathbb{R}^{dp\times 2sp+mp}$, where ${\bf{0}}\in \mathbb{R}^{dp\times 2sp}$ is a $0$-matrix. Then $f(\tilde C \tilde x) = f(\bar C \bar x) = f(Cx)$.

It remains to explain why this formulation represents the constraints of $(P_3)$. By definition of $\tilde l$ and $\tilde u$, the '$x$-part' of $\tilde x$ still is a $0,1$-solution. These $0,1$-entries play the same role as in the original construction by definition of the $\bar A_2$.  The slack variables are non-negative, and by the upper bounds only the slack variables in the first block of the construction can be greater than zero. The components of $\tilde \nu$ are sufficently large to not impose a restriction. Again it suffices to allow integral values for the slack variables, as both the $W^j$ and the $B_i^{\pm}$ are integral. The rows of the $\tilde A_1$-blocks then guarantee that the cluster sizes plus the slack variables add up to the given lower and upper bounds for each component of the weight vectors of the clusters, which implies that all of the clusters' total weights lie within the range given by the  $B_i^{\pm}$.

Thus the new system corresponds to $(P_3)$ and satisfies all prererequisites of Proposition \ref{prop:polytime}.
\end{proof}

\section{Polynomial-time land consolidation}

Our interest in the presented clustering framework whose favorable complexity we studied in Sections $2$ and $3$ originally came from an application in land consolidation. We now conclude our discussion with a new model and algorithm for it. Recall the description of the problem in Section $1.1$.

First, let us connect the general definitions used for model $(P_3)$ with the application. We here extend \cite{bbg-11}, which was a particularly useful model in practice. The items are the $n$ lots which have to be divided among the $p$ farmers $\pi_1,\dots,\pi_p$ (the clusters). We begin with the balancing constraints.

\subsection{Weight-balancing constraints}

The lots differ in $s$ different features, for example size, value (which is impacted by the quality of soil), attached subsidies, and so on. While some of these measures are independent of which farmer the lot is assigned to - for example the size and quality of soil are just fixed numbers - for others this may not be the case. The subsidies a farmer gets for cultivating a lot depend on several factors that differ between farmers - for example depending whether the farmer represents a small local family or a large agricultural business working in multiple regions.

For each lot, we set up a matrix $W^j\in \mathbb{R}^{s \times p}$. The column vectors $W_i^j$ list the contribution of lot $j$ to the total of a farmer  $\pi_i$ with respect to the $s$ features, if the lot is assigned to the farmer. The objective functions in the following will use the sizes of the lots of each farmer. The information on the size of lots is represented as one of the components in each column $W_i^j$ of $W^j\in \mathbb{R}^{s\times p}$. We give the size of lot $j$ the explicit name $\omega_j$.

Let $B_i\in \mathbb{R}^s$ list the summed-up total features for farmer $\pi_i$. In the redistribution process, the farmers do not accept a large deviation with respect to any of the features. For an accepted change of for example $3\%$, one obtains $B_i^-=0.97\cdot B_i$ and $B_i^+=1.03\cdot B_i$. The accepted deviations may also be defined differently for the different features.

\subsection{Objective function}

In the following, in a generalization of the model in \cite{bbg-11}, we discuss a family of objective functions for land consolidation that fit with our framework. Each lot $j$ is represented by its midpoint $z_j\in \mathbb{R}^2$. Further, the farmers $\pi_i$ specify a location $v_i\in \mathbb{R}^2$ of their farmstead. The distance of a lot and a farmstead is measured by the square of their Euclidean distance $\| v_i-z_j\|^2$.

We begin with the classical least-squares assignment for a given, single location $v_i$ of a farmstead (as a basic building block of our model). The lots are assigned to the farmers such that
\[\sum\limits_{i=1}^p\sum\limits_{j\in\pi_i} \omega_j\|v_i-z_j\|^2  \tag*{$(f_1)$}\] is minimized. The sizes $\omega_j$ of the lots are used as scaling factors for the distances $\|v_i-z_j\|^2$ to have a fair treatment of the assignment of one large lot or of many small lots.

It remains to explain why this objective function fits our framework, i.e. that it can be represented by a function $f:\mathbb{R}^{d\times p}\rightarrow \mathbb{R}$ that is passed the $p$ arguments of the form $\sum\limits_{j\in \pi_i} C_i^j $ and is convex on each of these sums \cite{or-04}:

For each item $j$, one can use $C^j_i=-\omega_j \|v_i-z_j\|^2$, so that  $\sum\limits_{j\in \pi_i} C_i^j =- \sum\limits_{j\in \pi_i}\omega_j \|v_i-z_j\|^2$. It then suffices to maximize the linear function $f$ that sums up its arguments. Formally $f(y)={\bf 1}^Ty$ with ${\bf 1} = (1,\dots,1)^T\in \mathbb{R}^{p}$. Note $d=1$.

Such a least-squares assignment favors a good assignment of lots of a farmer with larger total size over a good assignment of a smaller farmer. Thus one may want to 'normalize' the different parts of this sum. We introduce this new approach as {\bf normed least-squares assignment}.

 For this, define $C^j_i=\binom{-\omega_j \|v_i-z_j\|^2}{\omega_j} \in \mathbb{R}^2$ and use a function $f:\mathbb{R}^{2\times p}\rightarrow \mathbb{R}$ that sends $y=\left(\binom{y_{11}}{y_{12}},\dots,\binom{y_{p1}}{y_{p2}}\right)$ to $f(y)=\sum\limits_{i=1}^p \frac{y_{i1}}{y_{i2}}$. As $\sum\limits_{j\in \pi_i} C_i^j =\binom{- \sum\limits_{j\in \pi_i} \omega_j \|v_i-z_j\|^2}{\sum\limits_{j\in\pi_i} \omega_j}$, one obtains a maximization of \[- \sum\limits_{i=1}^p \frac{1}{\sum\limits_{j\in\pi_i} \omega_j} \sum\limits_{j\in\pi_i} \omega_j\|v_i-z_j\|^2,\] or equivalently, and more intuitively, a minimization of
\[\sum\limits_{i=1}^p \frac{1}{\sum\limits_{j\in\pi_i} \omega_j} \sum\limits_{j\in\pi_i} \omega_j\|v_i-z_j\|^2.\tag*{$(f_2)$}\]
By scaling by the inverse of each farmer's total assigned land, each farmer's quality of assignment contributes equally to the final objective function value.

Unfortunately, $f(y)=\sum\limits_{i=1}^p \frac{y_{i1}}{y_{i2}}$ is not convex, even when restricted to a strictly positive domain, so it is necessary to resort to a (provably good) approximation for $f_2$. For this, denote the original total size of each farmers' lots by $\kappa_i$ and the accepted lower and upper bounds on the sizes as $\kappa^{\pm}$. One can then use these apriori $\kappa_i$ to estimate the total size of lots of each farmer after the redistribution. (Compare this to the use of an approximate center of gravity in \cite{bg-12}.) This results in a minimization of \[\sum\limits_{i=1}^p \frac{1}{\kappa_i} \sum\limits_{j\in\pi_i} \omega_j\|v_i-z_j\|^2,\tag*{$(f_3)$}\]
which is a linear transform of a least-squares assignment and thus fits our framework, too. One can easily see this by using  $C^j_i=-\frac{\omega_j}{\kappa_i} \|v_i-z_j\|^2$. The approximation error of optimizing $f_3$ in place of $f_2$ is provably low.

\begin{lemma}\label{lem:approx}
Let $\pi$ be an optimal partition for $f_3$ and $\pi'$ be optimal for $f_2$. Then $\pi$ is a $(\max\limits_{i\leq p} \frac{\kappa_i}{\kappa_i^-})(\max\limits_{i\leq p}\frac{\kappa_i^+}{\kappa_i})$--approximation with respect to $f_2$.
\end{lemma}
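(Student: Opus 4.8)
The plan is to compare the two objectives cluster by cluster and to exploit that, for every \emph{feasible} partition, the total assigned land $\sigma_i(\pi):=\sum_{j\in\pi_i}\omega_j$ of farmer $\pi_i$ is sandwiched between $\kappa_i^-$ and $\kappa_i^+$, since the lot size $\omega_j$ is one of the $s$ balanced features. First I would fix the abbreviations $g_i(\pi):=\sum_{j\in\pi_i}\omega_j\|v_i-z_j\|^2\ge 0$ and $\sigma_i(\pi):=\sum_{j\in\pi_i}\omega_j$, so that $f_2(\pi)=\sum_{i=1}^p g_i(\pi)/\sigma_i(\pi)$ and $f_3(\pi)=\sum_{i=1}^p g_i(\pi)/\kappa_i$. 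I would also note that feasibility gives $0<\kappa_i^-\le\sigma_i(\pi)\le\kappa_i^+$ for each $i$, so in particular no cluster is empty and every quotient appearing below is well defined; this applies to both $\pi$ and $\pi'$.

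Next I would establish a two-sided comparison between $f_2$ and $f_3$ valid on every feasible partition. Writing $\alpha:=\max\limits_{i\le p}\kappa_i/\kappa_i^-$ and $\beta:=\max\limits_{i\le p}\kappa_i^+/\kappa_i$ (both $\ge 1$), the bounds $\kappa_i^-\le\sigma_i(\pi)\le\kappa_i^+$ together with $g_i(\pi)\ge0$ yield
$$\frac{g_i(\pi)}{\sigma_i(\pi)}\;\le\;\frac{g_i(\pi)}{\kappa_i^-}\;=\;\frac{\kappa_i}{\kappa_i^-}\cdot\frac{g_i(\pi)}{\kappa_i}\;\le\;\alpha\,\frac{g_i(\pi)}{\kappa_i}, \qquad \frac{g_i(\pi)}{\kappa_i}\;=\;\frac{\kappa_i^+}{\kappa_i}\cdot\frac{g_i(\pi)}{\kappa_i^+}\;\le\;\beta\,\frac{g_i(\pi)}{\sigma_i(\pi)}.$$
Summing over $i\in[p]$ gives $f_2(\pi)\le\alpha\,f_3(\pi)$ and $f_3(\pi)\le\beta\,f_2(\pi)$ for every feasible partition $\pi$, and the same for $\pi'$.

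Finally I would chain these with the optimality of $\pi$ for the (minimization) problem $f_3$: since $\pi'$ is feasible, $f_3(\pi)\le f_3(\pi')$, hence
$$f_2(\pi)\;\le\;\alpha\,f_3(\pi)\;\le\;\alpha\,f_3(\pi')\;\le\;\alpha\beta\,f_2(\pi'),$$
which is exactly the claimed $\bigl(\max\limits_{i\le p}\tfrac{\kappa_i}{\kappa_i^-}\bigr)\bigl(\max\limits_{i\le p}\tfrac{\kappa_i^+}{\kappa_i}\bigr)$--approximation guarantee. The argument is essentially bookkeeping; the only genuine point to get right is the first step — that the size feature really is among the balanced ones, so that $\sigma_i(\pi)\in[\kappa_i^-,\kappa_i^+]$ holds for \emph{both} optimizers, and that this rules out empty clusters so that the $f_2$-terms are defined at all. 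Everything else is monotonicity of $t\mapsto g_i/t$ on the positives and summation.
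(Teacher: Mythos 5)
Your proof is correct and follows exactly the same route as the paper's: the two per-partition bounds $f_2(\pi)\leq(\max_i\kappa_i/\kappa_i^-)f_3(\pi)$ and $f_3(\pi')\leq(\max_i\kappa_i^+/\kappa_i)f_2(\pi')$, chained through the optimality inequality $f_3(\pi)\leq f_3(\pi')$. You merely spell out the termwise monotonicity steps that the paper leaves implicit, which is a fine (and slightly more careful) presentation of the identical argument.
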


\begin{proof}
 For a simple notation, we refer to the corresponding objective function values as $f_3(\pi)$ and $f_2(\pi')$. As $\kappa_i^- \leq \sum\limits_{j\in\pi_i} \omega_j \leq \kappa_i^+$, one has both $f_2(\pi)\leq (\max\limits_{i\leq p} \frac{\kappa_i}{\kappa_i^-}) f_3(\pi)$ and
$f_3(\pi)\leq f_3(\pi')\leq (\max\limits_{i\leq p}\frac{\kappa_i^+}{\kappa_i})f_2(\pi')$, which combines to $f_2(\pi)\leq  (\max\limits_{i\leq p} \frac{\kappa_i}{\kappa_i^-}) \cdot  (\max\limits_{i\leq p}\frac{\kappa_i^+}{\kappa_i}) \cdot f_2(\pi')$. Thus we obtain a $(\max\limits_{i\leq p}  \frac{\kappa_i}{\kappa_i^-})(\max\limits_{i\leq p}\frac{\kappa_i^+}{\kappa_i})$--approximation.
\end{proof}

For an accepted upper and lower deviation of $3\%$, $f_3$ would yield a $1.0609$--approximation error for $f_2$. Note that $f_3$ is linear and is defined with $d=1$, but uses the fact that $C_{i_1}^j$ and $C_{i_2}^j$ can differ for $i_1 \neq i_2$. In the literature there are several examples for objective functions that use a larger $d$, but that only fit our framework for $\kappa_i^-=\kappa_i^+$ for all $i\leq p$. Examples include finding a partition of minimal variance \cite{dhorw-09, or-04} and pushing apart the centers of gravity of the partition parts \cite{bbg-14, bg-10}. Both of them can then be interpreted as norm-maximization over a gravity polytope or a shaped partition polytope; recall the final remarks in Section $2$.


\subsection{Algorithm and Efficiency}\label{sec:complexity}

Theorem \ref{thm:polysolve3} implies that for a fixed number $p$ of farmers, and if there is a fixed set $\Omega$ of vectors of lot features, the above model is solvable in polynomial time for $(f_1)$ and $(f_3)$ in the size of the input, in particular $n$.  (Recall that due to the way the features of the lots are measured, this is not a particularly restrictive assumption.) In combination with Lemma \ref{lem:approx}, we also obtain a polynomial-time approximation for $(f_2)$. Let us sum up this information.

\begin{theorem}\label{thm:landcomplexity}
Suppose the number of farmers $p$ is fixed and suppose there is a fixed set $\Omega \subset \mathbb{Z}^{s\times p}$ of weight matrices such that all $W^j \in \Omega$. Then the above model for land consolidation can be solved exactly in polynomial time for objective functions $f_1$ and $f_3$. For $f_2$ a $(\max\limits_{i\leq p} \frac{\kappa_i}{\kappa_i^-})(\max\limits_{i\leq p}\frac{\kappa_i^+}{\kappa_i})$--approximation can be computed in polynomial time.
\end{theorem}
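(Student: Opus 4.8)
The plan is to recognize this theorem as an assembly of the pieces already in place: Theorem~\ref{thm:polysolve3} together with the representability discussion for $(f_1)$ and $(f_3)$ in Section~4.2, and Lemma~\ref{lem:approx} for $(f_2)$. First I would observe that the weight-balancing constraints of the land consolidation model are literally an instance of $(P_3)$: the lots are the $n$ items, the farmers are the $p$ clusters, the matrices $W^j$ range over the fixed finite set $\Omega\subset\mathbb Z^{s\times p}$, and the bounds $B_i^\pm$ (e.g. $0.97\cdot B_i$ and $1.03\cdot B_i$) play the role of $B_i^\pm$ in $(P_3)$. Then I would recall from Section~4.2 that $(f_1)$ is captured by the framework with $d=1$, the linear (hence convex) objective $f(y)=\mathbf 1^T y$, and $C_i^j=-\omega_j\|v_i-z_j\|^2$; and that $(f_3)$ is likewise captured with $d=1$, the same linear $f$, and $C_i^j=-\frac{\omega_j}{\kappa_i}\|v_i-z_j\|^2$. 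Since $p$, $s$, $m=|\Omega|$ and $d$ are all fixed, Theorem~\ref{thm:polysolve3} applies verbatim and yields exact polynomial-time solvability for $(f_1)$ and $(f_3)$.

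The one technical point I would address in this step is integrality of the utility matrix: Proposition~\ref{prop:polytime} (and hence Theorem~\ref{thm:polysolve3}) wants $C\in\mathbb Z^{c\times N}$, whereas $\omega_j\|v_i-z_j\|^2$ and $\frac{\omega_j}{\kappa_i}\|v_i-z_j\|^2$ are rational. Because the objective $f$ here is just the sum of its $p$ arguments, multiplying every $C_i^j$ by one common positive integer (a common denominator of all the $\|v_i-z_j\|^2$ and of the $\kappa_i$) leaves the set of maximizing partitions unchanged, and this common multiplier has encoding length polynomial in the input. So after this harmless rescaling the hypotheses of Theorem~\ref{thm:polysolve3} are met and $(f_1)$, $(f_3)$ are solved exactly in time polynomial in $n$ (and the binary length of the coordinates and feature bounds).

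For $(f_2)$ the function $f(y)=\sum_{i=1}^p y_{i1}/y_{i2}$ is not convex, so Theorem~\ref{thm:polysolve3} cannot be invoked directly; instead I would compute an exact optimizer $\pi$ of $(f_3)$ in polynomial time as above, and then simply quote Lemma~\ref{lem:approx}, which guarantees that this $\pi$ is a $(\max_{i\le p}\frac{\kappa_i}{\kappa_i^-})(\max_{i\le p}\frac{\kappa_i^+}{\kappa_i})$--approximation for $(f_2)$. This gives the claimed polynomial-time approximation and completes the proof. I do not expect a genuine obstacle here — the result is essentially a corollary of Theorem~\ref{thm:polysolve3} and Lemma~\ref{lem:approx} — the only thing needing a sentence of care is the rational-to-integer rescaling just described, and the observation that every parameter other than $n$ (and the bit-lengths of the numerical data) is held fixed, so ``polynomial time'' is meant in $n$.
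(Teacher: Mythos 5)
Your proposal is correct and follows essentially the same route as the paper: instantiate $(P_3)$ with the land-consolidation data, note that $(f_1)$ and $(f_3)$ are linear (hence convex) in the summed utility vectors so Theorem~\ref{thm:polysolve3} applies, and invoke Lemma~\ref{lem:approx} on the exact $(f_3)$-optimizer to get the stated approximation for $(f_2)$. Your extra remark on rescaling the rational utilities to integers to meet the hypotheses of Proposition~\ref{prop:polytime} is a legitimate point of care that the paper passes over silently, but it does not change the argument.
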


Algorithm \ref{algo:polytime} describes the method in pseudocode for objective function $(f_3)$; we use the notation from the proof of Theorem  \ref{thm:polysolve3}. First, the input data is used for a formulation $(P^*)$  of the problem in the form
\[\max \{ f(\tilde C\tilde x): \tilde A^{(n)}\tilde x=\tilde b', 0 \leq \tilde x \leq \tilde u, \tilde x\in \mathbb{Z}^{\tilde N}\}. \tag*{$(P^*)$}\]
For better readability, we have split up the description in two parts: First, one sets up a problem statement in the form $(P_3)$ (step $1$), then this formulation is transformed to the form $(P^*)$ (step $2$). In our practical implementation, we of course construct the matrices and vectors for $(P^*)$ directly. In Section $3$, we showed that this can be done in polynomial time and that $(P^*)$ is of polynomial size in the input.

The input to the problem is an actual distribution of farmland, which we represent by $y\in \{0,1\}^{p\times n}$ with $y_{ij}=1$ if lot $j$ belongs to farmer $i$ and $y_{ij}=0$ otherwise. In particular, the original distribution is feasible with respect to the given lower and upper bounds $B_i^{\pm}$. Thus there always is a feasible solution to $(P_3)$, and thus to $(P^*)$, as well. The proofs of Theorem \ref{thm:polysolve} and \ref{thm:polysolve3} explain how to construct an initial feasible solution $\tilde x_*$ for $(P^*)$ from $y$. This is done in step $3$; the original vector is `expanded' by a polynomial number of zeroes in the correct components. Analogously, in the final step $5$ of the algorithm, it then is easy to return an optimal vector of decision variables $y^*\in \{0,1\}^{p\times n}$ from the optimal solution $\tilde x^*$ for $(P^*)$.


Steps $1$ to $3$ provide a suitable formulation $(P^*)$ and an initial feasible solution, such that the problem can be solved efficiently with tools from the literature. In step $4$, one
iteratively augments this initial feasible solution to an optimal one using Graver bases methods. These methods were first shown to run in polynomial time in \cite{dhow-08}
and drastically improved in \cite{hor-13} which is the fastest algorithm to-date.

As a service to the reader, let us briefly explain the main ingredients of
this approach. For a more detailed background see the survey \cite{o-12}
and the books \cite{dhk-13, o-10}, as well as the papers \cite{dhorw-09, dhow-08, hor-13}.
The basic notion underlying these methods is the so-called {\em Graver basis}
$\mathcal{G}(\tilde A^{(n)})$ of the matrix $\tilde A^{(n)}$. Informally, this is a finite set of vectors $g$ with some special properties: in particular, if a current solution $\tilde x$ is not optimal, and $f$ is linear, there is a $g\in  \mathcal{G}(\tilde A^{(n)})$ and an $\alpha\in \mathbb{N}$ such that $\tilde x + \alpha\cdot g$ is another feasible solution and satisfies $f(\tilde x)< f(\tilde x + \alpha\cdot g)$. This gives rise to an iterative augmentation scheme, which is performed in step $4$.

Recall that $\Omega$ is fixed in our setting. In turn, this implies that the matrices $\tilde A_1$ and $\tilde A_2$ are fixed
(the proof of this fact was one of our main concerns in Section $3$). Thus the Graver basis $\mathcal{G}(\binom{\tilde A_1}{\tilde A_2})$
of the matrix $\binom{\tilde A_1}{\tilde A_2}$ is also fixed. The key theorem in the theory of $n$-fold integer programming states that in this situation 
the size of $\mathcal{G}(\tilde A^{(n)})$ is polynomial in $n$ and it can be computed in time polynomial in $n$ (see Theorem $4.4$, Chapter $4$, in \cite{o-10}).

If $f$ is arbitrary convex, one cannot avoid computing the entire Graver basis, which is polynomial in $n$ but of large degree,
and use the algorithm in \cite{dhorw-09}. However, due to $(f_3)$ being linear, it is possible to use the drastically stronger result of \cite{hor-13}.
We briefly explain how this works. To show that this scheme is efficient and polynomial, it is necessary to check that both the number
of augmentations and the computational effort for each augmentation is polynomial in the input. 

For the number of augmentations, it can be shown that if one performs so-called {\em Graver-best augmentation steps}, i.e. one always uses a best 
combination of $g$ and $\alpha$ to improve on the current feasible solution, one then obtains an upper bound $O(nL)$ on the number of necessary steps,
where $L$ is the bit size of the input, that is, a bound which is linear in $n$ and in $L$ (see the proof of  Lemma $3.10$, Chapter $4$, in \cite{o-10}).

To find the Graver-best augmentation step at each iteration, the first approach, taken in
\cite{dhow-08}, was to compute the entire Graver basis, which, as said, has size polynomial in $n$ but of large degree. However, in \cite{hor-13} it was shown that using a sophisticated 
dynamic program in each iteration, it is possible to find a Graver-best augmentation step
at that iteration {\em without} explicitly constructing the Graver basis, in time $O(n^2)$. 
This leads to a very fast and practically implementable total running time $O(n^3L)$ for
augmenting the initial feasible solution to an optimal one.

\begin{algorithm}[H]
\vspace{0.2cm}

 {\bf Input}
\begin{itemize}
\item $p$ farmers, and for each farmer $i$
\vspace*{-0.15cm}
\begin{itemize}
\item accepted lower and upper bounds $B_i^{\pm}\in \mathbb{R}^s$ with respect to $s$ features of the lots
\item the geographical location $v_i\in \mathbb{R}^2$ of a farmstead
\end{itemize}
\item $n$ lots, and for each lot $j$
\vspace*{-0.15cm}
\begin{itemize}
\item a matrix $W^j=(W_1^j,\dots,W_p^j)\in \mathbb{R}^{s\times p}$ of contributions $W_i^j\in \mathbb{R}^s$ with respect to all $s$ features if lot $j$ is assigned to farmer $i$. $\omega_j$ denotes the size of lot $j$. The $W_i^j$ come from a finite, fixed domain $\Omega$.
\item its geographical location $z_j\in \mathbb{R}^2$
\end{itemize}
\item Original land distribution represented by $y\in \{0,1\}^{p\times n}$ with $y_{ij}=1$ if lot $j$ belongs to farmer $i$ and $y_{ij}=0$ otherwise. $\kappa_i$ denotes the original total size of all lots of farmer $i$, and is derived from $y$ and the $\omega_j$.
\end{itemize}

\vspace{0.1cm}
We denote by $L$ the input size which is the total number of bits in the binary encoding of the data $(B_i^\pm, v_i, \omega_j, z_j, y)$. The matrices $W^j$ are fixed and hence do not have to be considered for $L$.

\vspace{0.1cm}
\noindent {\bf Output}
\begin{itemize}
\item Partition of the lots into $p$ clusters $\pi_1,\dots,\pi_p$ that is optimal with respect to $(f_3)$, i.e. an optimal solution for
 \[\min\sum\limits_{i=1}^p \frac{1}{\kappa_i} \sum\limits_{j\in\pi_i} \omega_j\|v_i-z_j\|^2\]
that adheres to the accepted lower and upper bounds
\end{itemize}

\noindent {\bf Algorithm}
  \begin{enumerate}
 \item Use slack vectors $S_i^{\pm}\in \mathbb{Z}^s$, $f:\mathbb{R}^p\rightarrow \mathbb{R}$ with $f(y)=(1,\dots,1)^Ty$ , and matrix $C=(C_1^j,\dots,C_p^j)\in \mathbb{R}^{1\times p}$ of utility $C^j_i=-\frac{\omega_j}{\kappa_i} \|v_i-x_j\|^2$ of lot $j$ to describe the problem in the form $(P_3)$ (see Section \ref{sec:model})
\item Transform the formulation $(P_3)$ to the form $(P^*)$ (see the proof of Theorem \ref{thm:polysolve3})
 $$\max \{ f(\tilde C\tilde x): \tilde A^{(n)}\tilde x=\tilde b', 0 \leq \tilde x \leq \tilde u, \tilde x\in \mathbb{Z}^{\tilde N}\}$$
\item Construct a feasible solution $\tilde x_*$ for $(P^*)$ from $y$ (see the proofs of Theorems \ref{thm:polysolve} and \ref{thm:polysolve3})
\item Augment the initial feasible solution $\tilde x_*$ to an optimal one $\tilde x^*$
 by iteratively using Graver-best augmentation steps using the algorithm of \cite{hor-13}, in total time $O(n^3L)$. 
\item Construct a decision variable vector $y^*\in \{0,1\}^{p\times n}$ that corresponds to the optimal $\tilde x^*$ and return $y^*$ (see the proofs of Theorems \ref{thm:polysolve} and \ref{thm:polysolve3})
\end{enumerate}
  \caption{Polynomial-Time Land Consolidation}
   \label{algo:polytime}
\end{algorithm}

\section*{Acknowledgements}
\noindent The first author gratefully acknowledges support from the Alexander-von-Humboldt Foundation. The second author was partially supported by the Dresner Chair at the Technion.


\bibliographystyle{plain}
\bibliography{literature}

\end{document}